\theoremstyle{plain}
\newtheorem{theorem}{Theorem}[section]
\newtheorem{corollary}[theorem]{Corollary}
\newtheorem{proposition}[theorem]{Proposition}
\newtheorem{lemma}[theorem]{Lemma}
\theoremstyle{definition}
\newtheorem{definition}[theorem]{Definition}
\newtheorem{remark}[theorem]{Remark}
\newcommand{\C}{\mathbb{C}}
\newcommand{\N}{\mathbb{N}}
\newcommand{\F}{\mathcal{F}}
\newcommand{\norm}[1]{\left\lVert#1\right\rVert}
\newcommand{\abs}[1]{\left | {#1}\right | }
\newcommand{\all}[2]{ \{\, {#1} \, : \, {#2} \, \} }
\DeclareMathOperator{\rank}{rank}
\DeclareMathOperator{\Hdist}{Hdist}
\begin{document}

\title[Spectral distance of trace class operators]{Explicit upper bounds for the spectral distance of two trace class operators}

\author[O.F.~Bandtlow]{Oscar F.~Bandtlow}
\address{%
Oscar F.~Bandtlow\\
School of Mathematical Sciences\\
Queen Mary University of London\\
London E1 4NS\\
UK.
}
\email{o.bandtlow@qmul.ac.uk}

\author[A.~G\"{u}ven]{Ay\c{s}e G\"{u}ven}
\address{%
Ay\c{s}e G\"{u}ven\\
School of Mathematical Sciences\\
Queen Mary University of London\\
London E1 4NS\\
UK.
}
\email{a.guven@qmul.ac.uk}

\subjclass{47A55, 47B10, 4710}

\keywords{Spectral distance, spectral variation, trace class
  operators, determinants}

\date{September 27, 2014}

\begin{abstract}
Given two trace class operators $A$ and $B$ on a separable Hilbert
space we provide an upper bound for the Hausdorff
distance of their spectra involving only the distance of $A$ and $B$
in operator norm and the singular values of $A$ and
$B$. By specifying particular asymptotics of the singular values 
our bound reproduces or
improves existing bounds for the spectral distance. 
The proof is
based on lower and upper bounds for determinants of trace class
operators of independent interest.

\end{abstract}

\maketitle

\section{Introduction}
Given an arbitrary compact operator $A$ on a separable Hilbert space an 
interesting question of practical importance is to determine its
spectrum $\sigma(A)$. One way of tackling it numerically is to reduce the 
infinite-dimensional problem to a finite-dimensional one by
manufacturing a sequence of finite rank operators $(A_k)_{k=1}^\infty$ 
converging to $A$ in operator norm and using the fact that the eigenvalues of $A_k$,
converge to the eigenvalues of $A$. In practice, one has to stop
after a finite number of steps. If there is interest in 
error estimates, the problem arises how to explicitly
bound the distance of the spectrum of a finite rank approximant $A_k$ to  
the spectrum of $A$ in a suitable sense. A popular choice is 
the Hausdorff metric, the definition of which we briefly recall. 

For $z\in \C$ and a compact subset $\sigma\subset \C$ let  
\[ d(z,\sigma)=\inf_{\lambda\in \sigma}\abs{z-\lambda} \] 
denote the distance of $z$ to $\sigma$. Given two compact subsets 
$\sigma_1, \sigma_2 \subset \mathbb C$ their  
\emph{Hausdorff distance} 
is given by 
\[
\Hdist(\sigma_1, \sigma_2) 
 = \max\{ \hat d(\sigma_1, \sigma_2), \hat d(\sigma_2, \sigma_1)\}
\]
where 
\[
\hat d(\sigma_1, \sigma_2) = \sup_{\lambda \in\sigma_1} d(\lambda, \sigma_2)\,.
\] 
It is not difficult to see that the Hausdorff distance is a metric on the set of 
compact subsets of $\C$. 

The finite-dimensional prototype of an explicitly computable bound for
the Hausdorff distance of the spectra of two $n\times n$ matrices $A$
and $B$, also known as the \emph{spectral distance of $A$ an $B$} in
this context, is due to Elsner \cite{4}, sharpening 
various earlier bounds (see \cite[Chapter~VIII]{bhatia} for a discussion) 
beginning with Ostrowski \cite{ostrowski}, 
and reads as follows
\begin{equation}
\label{eq:elsner}
\Hdist(\sigma(A), \sigma(B)) \leq 
(\norm{A}+\norm{B})^{1-{\frac{1}{n}}} \norm{A-B}^{\frac{1}{n}}\,,
\end{equation} 
where $\norm{\cdot}$ denotes the operator norm corresponding to the Euclidean 
norm on $\C^n$. Note that if the operator norm is 
taken with respect to any other 
(non-Euclidean) norm on $\C^n$, then (\ref{eq:elsner}) does not 
hold (see \cite{ransford}). 

Unfortunately, a straightforward extension of this formula to the
in\-fi\-nite-di\-men\-si\-o\-nal 
setting by simply passing to the limit fails,
due to the presence of the exponent $1/n$. However, 
versions of (\ref{eq:elsner}) hold 
for $A$ and $B$ algebraic elements of a Banach algebra of degree less than $n$ 
(see \cite{chenetal}). 

Infinite-dimensional versions of (\ref{eq:elsner}) have so far been 
obtained essentially by one method going back to Henrici \cite{henrici} which   
relies on writing a given compact operator 
as a perturbation of a quasi-nilpotent operator by a normal operator with 
the same spectrum as the original operator. Using this approach upper 
bounds for the spectral distance of Schatten class operators have been obtained by
Gil' in a series of papers (see \cite{33} and references therein for an 
overview) and one of the authors (see \cite{1,10}). 
Similar bounds, less sharp, 
but valid more generally for operators in symmetrically normed 
ideals are due to Pokrzywa \cite{71}, using the same method. For
recent extensions of the results in \cite{33} to unbounded operators
with inverses in the Schatten classes see \cite{gil12, gil13}.

This article grew out of an attempt to bypass the Henrici argument and instead 
make Elsner's delightful determinant based proof of 
(\ref{eq:elsner}) work in the infinite-dimensional setting. By considering determinants and 
keeping track of the singular values of the operators involved we arrive at a version of 
Elsner's formula~(\ref{eq:elsner}) which holds for trace class operators and which 
yields new bounds even in finite dimensions.

It is organised as follows. In Section~\ref{sec:2} we provide some background on trace class
operators $A$ and the corresponding determinants $\det(I-A)$.  
In the following section (Section~\ref{sec:3}) we first derive 
a lower bound for $\det(I-z^{-1}A)$, where $A$ is trace class, 
expressible solely in
terms of the distance of $z$ to the spectrum of $A$ and the singular
values of $A$. This bound is of independent interest and does not seem
to have appeared in the literature yet. Next, for $z$ an eigenvalue of 
a bounded operator $B$, we derive an upper bound for $\det(I-z^{-1}A)$
in terms of the distance of $z$ to the spectrum of $A$ and
$\norm{A-B}$. In Section~\ref{sec:4} we combine the upper and lower
bounds to obtain our main result. 
We finish (Section~\ref{sec:5}) by comparing our bound to Elsner's and show 
that it reduces to or improves the bounds in \cite{1,10}.

\section{Trace class operators and their determinants}
\label{sec:2}
In this section we briefly recapitulate some facts about trace class operators 
and their determinants, which will be crucial for what is to
follow.

Let $H$ be a separable Hilbert space with
scalar product $(\cdot , \cdot )$. 
We write $L(H)$ for the Banach space of bounded linear operators
on $H$ equipped with the uniform operator norm $\norm{\cdot}$. 
For $A\in L(H)$ the spectrum and the set of eigenvalues will be denoted
by $\sigma(A)$ and $\sigma_p(A)$, respectively, while the resolvent
set will be denoted by $\rho(A)$. 

If $A\in L(H)$ is a compact operator, we use 
 $(\lambda_n(A))_{n\in \N}$ to denote its
eigenvalue sequence, counting algebraic multiplicities and ordered by
decreasing modulus so that 
\[ \abs{\lambda_1(A)}\geq \abs{\lambda_2(A)} \geq \cdots \]
If $A$ has only finitely many non-zero eigenvalues, we set
$\lambda_n(A)=0$ for $n>N$, where $N$ denotes the number of non-zero
eigenvalues of $A$. For $n\in \N$, the $n$-th singular value of $A$ is given by 
\[ s_n(A)=\sqrt{\lambda_n(A^*A)} \quad (n \in \N)\,, \]
where $A^*$ denotes the Hilbert space adjoint of $A$. 

Eigenvalues and singular values satisfy a number of inequalities known
as Weyl's inequalities. The most important of them is the
multiplicative Weyl inequality (see, for example, 
\cite[Chapter~VI, Theorem~2.1]{3})  
\begin{equation}\label{pp}
\prod_{k=1}^{n}|\lambda_k(A)| \leq \prod_{k=1}^{n}s_k(A)  \quad
(\forall n \in \N)\,.
\end{equation}
The following additive version follows from the
multiplicative one 
(see, for example, \cite[Chapter~VI, Corollary~2.3]{3}) 
\begin{equation}\label{v}
\sum_{k=1}^{n} |\lambda_k(A)| \leq \sum_{k=1}^{n}s_j(A)  \quad
(\forall n\in \N)
\end{equation} 
and so does the following (see, for example, \cite[Chapter~VI, Corollary~2.5]{3}) 
\begin{equation}
\label{vv}
\prod_{k=1}^{n}(1+r|\lambda_k(A)|) \leq \prod_{k=1}^{n}(1+rs_k(A))
\quad (\forall r\geq 0, \forall n \in \N)\,.
\end{equation}

We now recall the notion of a trace class operator as an element of
the set   
\[
S_1(H)= \all{A \in L(H)}{\text{$A$ compact and 
 $\norm{A}_{1}= \sum_{k=1}^{\infty}s_k(A) < \infty$}}\,.
\]
It turns out that $\norm{\cdot}_{1}$ is a norm on $S_1(H)$ turning it
into a Banach space (see, for example, 
\cite[Chapter~VI, Corollary~3.2 and Theorem~4.1]{3}). 

With any a trace class operator $A\in S_1(H)$ it is possible to
associate a determinant given by 
\[ \det(I - A) = \prod_{k=1}^\infty(1- \lambda_k(A))\,,\]  
which, in view of (\ref{vv}) and the summability of the singular
values, is well-defined and satisfies the bound 
\[ \abs{\det(I-A)} \leq \prod_{k=1}^\infty(1+\abs{\lambda_k(A)}) 
   \leq \prod_{k=1}^\infty(1+s_k(A)) 
\leq \exp \left ( \norm{A}_1\right ) \,. \]
In particular, this implies that $z\mapsto \det(I-zA)$ is an entire
function of exponential type zero (see, for example, 
\cite[Chapter~VII, Theorem~4.3]{3}).

\section{Upper and lower bounds for determinants}
\label{sec:3}
In this section we shall derive both upper and lower bounds for the
determinant of a trace class operator $A$ expressible in terms of the
function $F_A:[0,\infty)\to [0,\infty)$ given by 
\begin{equation}
\label{eq:FAdef}
F_A(r)=\prod_{k=1}^\infty(1+r s_k(A))\,,
\end{equation}
which is well-defined, real-analytic and strictly monotonically increasing. 

Before deriving these bounds we briefly recall two more facts about
singular values. The first is a useful alternative characterisation in
terms of approximation numbers (see, for example, \cite[Chapter VI, Theorem~1.5]{3})
\begin{equation}
\label{eq:snisan}
s_n(A) = \inf \all{\norm{A-F}}{F\in L(H),\,\rank(F)<n} \quad
  (\forall n \in \N)\,.
\end{equation}
Using this characterisation the second fact follows:
if $G$ is a closed subspace of $H$ with orthogonal projection
$P$ we have (see, for example, 
\cite[Chapter VI, Corollary~1.4]{3})
\begin{equation}
  \label{eq:snpag}
   s_n(PA|_{P(H)}) \leq s_n(A) \quad (\forall n \in \N)\,.
\end{equation}

By definition of the determinant of a trace class oprator $A$, the
function $z\mapsto \det(I-z^{-1}A)$ has zeros at the non-zero
eigenvalues of $A$. The following proposition provides a new lower bound
for the behaviour of $z\mapsto \det(I-z^{-1}A)$ in the vicinity of a
zero solely in terms of the function $F_A$ defined in (\ref{eq:FAdef})
and the distance of $z$ to the spectrum of $A$. 

\begin{proposition}
\label{prop:detl}
Let $A\in S_1(H)$. Then, for any $z\in \rho(A)$ with $z\neq 0$, we have  
  \[ \abs{\det(I-z^{-1}A)}^{-1}\leq F_A\left ( \frac{1}{d(z,\sigma(A))} \right )\,.\]
\end{proposition}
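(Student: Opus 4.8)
I want to bound $\abs{\det(I-z^{-1}A)}^{-1}$ from above, equivalently to bound $\abs{\det(I-z^{-1}A)}$ from below. The natural starting point is the product formula
\[
\det(I-z^{-1}A) = \prod_{k=1}^\infty\left(1-\frac{\lambda_k(A)}{z}\right),
\]
so that
\[
\abs{\det(I-z^{-1}A)} = \prod_{k=1}^\infty\left|1-\frac{\lambda_k(A)}{z}\right|
= \prod_{k=1}^\infty\frac{\abs{z-\lambda_k(A)}}{\abs{z}}.
\]

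**Key steps.** First I would observe the trivial geometric bound $\abs{z-\lambda_k(A)}\geq d(z,\sigma(A))$ for every $k$, since $\lambda_k(A)\in\sigma(A)$. This alone is not enough because it ignores the decay of the $\lambda_k$, so the second step is to argue that for large $k$ the factor $\abs{1-\lambda_k(A)/z}$ is close to $1$ and we should instead use $\abs{z-\lambda_k(A)}\geq \abs{z}-\abs{\lambda_k(A)}$, which is useful precisely when $\abs{\lambda_k(A)}<\abs{z}$. The cleanest uniform way to combine these is: for each $k$,
\[
\left|1-\frac{\lambda_k(A)}{z}\right| \geq \max\left\{\frac{d(z,\sigma(A))}{\abs{z}},\ 1-\frac{\abs{\lambda_k(A)}}{\abs{z}}\right\}\,,
\]
but the genuinely useful elementary inequality is
\[
\left|1-\frac{\lambda_k(A)}{z}\right| \geq \frac{1}{1+\dfrac{\abs{\lambda_k(A)}}{d(z,\sigma(A))}}\,,
\]
which I would prove by checking the two cases $\abs{\lambda_k(A)}\leq \abs{z}$ (use $\abs{z-\lambda_k(A)}\geq\abs{z}-\abs{\lambda_k(A)}$ together with $\abs{z}\le\abs{z-\lambda_k(A)} + \abs{\lambda_k(A)}$ — wait, more simply, $\abs{z-\lambda_k(A)}\ge d(z,\sigma(A))$ and $\abs{z}\le\abs{z-\lambda_k(A)}+\abs{\lambda_k(A)}$ give $\abs{1-\lambda_k/z} = \abs{z-\lambda_k}/\abs z \ge \abs{z-\lambda_k}/(\abs{z-\lambda_k}+\abs{\lambda_k}) \ge d(z,\sigma(A))/(d(z,\sigma(A))+\abs{\lambda_k})$, using that $t\mapsto t/(t+\abs{\lambda_k})$ is increasing) and the case $\abs{\lambda_k(A)}>\abs{z}$ (then $d(z,\sigma(A))\le\abs z<\abs{\lambda_k(A)}$, and the right-hand side is $<1/2$, while the left-hand side... actually the case analysis collapses because $\abs{z-\lambda_k}\ge d(z,\sigma(A))$ and $\abs z\le d(z,\sigma(A)) + \abs{\lambda_k}$ hold regardless, since $d(z,\sigma(A))\le\abs{z}$ as $0\in\sigma(A)$ — no, $0$ need not be in $\sigma(A)$). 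Let me keep it clean: in all cases $\abs{z-\lambda_k(A)}\ge d(z,\sigma(A))$, and $\abs z \le \abs{z-\lambda_k(A)} + \abs{\lambda_k(A)}$; hence
\[
\left|1-\frac{\lambda_k(A)}{z}\right| = \frac{\abs{z-\lambda_k(A)}}{\abs z} \ge \frac{\abs{z-\lambda_k(A)}}{\abs{z-\lambda_k(A)}+\abs{\lambda_k(A)}} = \frac{1}{1+\dfrac{\abs{\lambda_k(A)}}{\abs{z-\lambda_k(A)}}} \ge \frac{1}{1+\dfrac{\abs{\lambda_k(A)}}{d(z,\sigma(A))}}\,.
\]
Taking the product over $k$ and writing $r=1/d(z,\sigma(A))$ gives
\[
\abs{\det(I-z^{-1}A)} \ge \prod_{k=1}^\infty\frac{1}{1+r\abs{\lambda_k(A)}} = \left(\prod_{k=1}^\infty(1+r\abs{\lambda_k(A)})\right)^{-1}\,.
\]
The final step is to invoke the Weyl inequality~(\ref{vv}), which gives $\prod_{k=1}^\infty(1+r\abs{\lambda_k(A)})\le\prod_{k=1}^\infty(1+rs_k(A))=F_A(r)$, and the proposition follows after taking reciprocals.

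**Main obstacle.** The analytic content is light; the only real subtlety is establishing the pointwise inequality $\abs{1-\lambda_k(A)/z}\ge (1+r\abs{\lambda_k(A)})^{-1}$ cleanly for \emph{all} $k$ with a single argument, avoiding a fussy case split — the chain above does this using only the triangle inequality and monotonicity of $t\mapsto t/(t+c)$. One should also note at the outset that if $A$ has finitely many nonzero eigenvalues the infinite products are really finite and everything still goes through (the padding $\lambda_k(A)=0$ contributes factors equal to $1$), and that convergence of all products is guaranteed by trace class summability together with~(\ref{vv}) as recalled in Section~\ref{sec:2}. Handing the bound $\prod(1+r\abs{\lambda_k(A)})\le F_A(r)$ directly to~(\ref{vv}) is what makes the singular values appear on the right-hand side, which is the whole point.
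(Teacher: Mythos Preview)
Your argument is correct and follows essentially the same route as the paper: the paper works with the reciprocal directly, using the identity $\frac{1}{1-z^{-1}\lambda_k}=1+\frac{\lambda_k}{z-\lambda_k}$ together with the triangle inequality to obtain $\abs{1-z^{-1}\lambda_k}^{-1}\le 1+\abs{\lambda_k}/d(z,\sigma(A))$, which is exactly your pointwise bound, and then finishes with~(\ref{vv}). Your detours about case splits and whether $0\in\sigma(A)$ are unnecessary, as your own final chain already shows.
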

\begin{proof}
For brevity, write $\lambda_k=\lambda_k(A)$ and $s_k=s_k(A)$. Then,
for $z\in \rho(A)$ with $z\neq 0$, we have                        
\begin{multline*}
  \abs{\det(I-z^{-1}A)}^{-1} 
   = \prod_{k=1}^{\infty} 
      \abs{ \frac{1}{1-z^{-1}\lambda_k}}
   = \prod_{k=1}^{\infty} 
      \abs{1+ \frac{\lambda_k}{z-\lambda_k}} \\
  \leq \prod_{k=1}^{\infty}
     \left( 1+ \frac{\abs{\lambda_k}}{\abs{z-\lambda_k}}\right)
  \leq \prod_{k=1}^{\infty} \left( 1+ \frac{\abs{\lambda_k}}{d(z,
         \sigma(A))} \right) 
   \leq \prod_{k=1}^{\infty} \left( 1+ \frac{s_k}{d(z,
       \sigma(A))}\right )\,,
\end{multline*}
where the last inequality follows from (\ref{vv}). 
\end{proof}

We now provide an upper bound for $\det(I-z^{-1}A)$ whenever $z$ is an
eigenvalue of an operator $B\in L(H)$ expressible in terms of the
distance of $z$ to $\sigma(A)$ and the distance of $A$ and $B$. 

\begin{proposition}
\label{prop:detu}
Suppose that $\dim H=\infty$ and that 
$A,B\in L(H)$. If $A$ has finite rank, then for any $z\in \rho(A)\cap
\sigma_p(B)$ we have  
  \[ \abs{\det(I-z^{-1}A)}
     \leq \frac{\norm{A-B}}{d(z,\sigma(A))}
           \prod_{k=1}^{n}\left(1+\frac{s_k(A)}{d(z,\sigma(A))}\right)\,,\]
where $n=\rank(A)$. 
\end{proposition}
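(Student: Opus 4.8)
The plan is to write $\det(I - z^{-1}A)$ as an $n\times n$ determinant (since $A$ has rank $n$), factor out the offending zero at $z$ coming from the fact that $z$ is \emph{not} quite in $\sigma(A)$ but is in $\sigma_p(B)$, and pay for that factor with $\norm{A-B}$. Concretely, let $G = \Ran(A) + \Ran(A^*) + \Ker(z - B)$; this is a finite-dimensional subspace (finite because $\Ran A$, $\Ran A^*$ are finite-dimensional and the eigenspace $\Ker(z-B)$ is at least one-dimensional — one could even get away with adjoining a single eigenvector of $B$ for eigenvalue $z$). Let $P$ be the orthogonal projection onto $G$ and set $A' = PA|_{G}$, $B' = PB|_{G}$, operators on the finite-dimensional space $G$. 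Since $\Ran A \subset G$ and $(\Ker P)\subset \Ker A^* $, one checks $PA = A = AP$ on $H$, hence $\det(I - z^{-1}A) = \det(I_G - z^{-1}A')$, and similarly $z$ remains an eigenvalue of $B'$ because its eigenvector lies in $G$.

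Next I would run Elsner's determinant identity in the finite-dimensional space $G$. Writing $d = d(z,\sigma(A)) = d(z,\sigma(A'))$ (the nonzero eigenvalues of $A$ and $A'$ coincide, and $0\in\sigma(A')$ contributes at most $\abs{z}\ge d$, so this equality or at least the needed inequality holds), the key algebraic step is the factorisation
\[
\det(zI_G - A') = \det\bigl((zI_G - B') + (B' - A')\bigr),
\]
expand by multilinearity of the determinant in the columns, and use that $z I_G - B'$ is singular (as $z\in\sigma_p(B')$) so the term with no columns from $B'-A'$ vanishes. Each surviving term has at least one column equal to a column of $B'-A'$, whose norm is at most $\norm{B'-A'}\le\norm{B-A}$ by (\ref{eq:snpag})-type estimates; bounding the remaining columns by Hadamard's inequality in terms of the singular values of $zI_G - A'$, and using $s_k(zI_G - A') \le \abs{z} + \text{(stuff)}$... — more cleanly, one applies the standard estimate $|\det(X + Y)| \le \ldots$ comparing to $\prod(s_k(X) + \text{rank-one corrections})$. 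The cleanest route: $zI_G - A' = (zI_G - B') + (B' - A')$ with the first summand singular, so by the cofactor/perturbation bound $|\det(zI_G - A')| \le \norm{B'-A'}\prod_{k=1}^{n-1} s_k(zI_G - A')$ where I reorganise to pull out the factor, then divide through by $|z|^n$ and use $s_k(I - z^{-1}A') \le 1 + s_k(A')/|z| \le \ldots$; combined with the $\frac{1}{d}$ coming from reinterpreting one factor via $d \le |z - \lambda|$, this yields the claimed bound with the product running $k=1$ to $n$.

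The main obstacle I anticipate is \textbf{bookkeeping the passage from $H$ to the finite-dimensional $G$ without losing sharpness}: I must verify that $\det(I-z^{-1}A) = \det(I_G - z^{-1}A')$ exactly (not just up to a bounded factor), that $d(z,\sigma(A))$ may be replaced by $d(z,\sigma(A'))$ — here the only subtlety is the spurious eigenvalue $0$ of $A'$, handled because $\abs{z - 0} = \abs{z}\ge d(z,\sigma(A))$ whenever $0\in\sigma(A)$, and if $0\notin\sigma(A)$ then $A$ is invertible and finite rank forces $\dim H<\infty$, excluded by hypothesis — and that $s_k(A') \le s_k(A)$ via (\ref{eq:snpag}). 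The other delicate point is getting the factor $\norm{A-B}/d(z,\sigma(A))$ with exactly one singular-value factor absorbed, i.e.\ ensuring the Hadamard-type estimate leaves a product of precisely $n$ terms $1 + s_k(A)/d$; this is where Elsner's trick of splitting off the singular matrix $zI_G - B'$ does the essential work, and the rest is a direct column-by-column norm estimate.
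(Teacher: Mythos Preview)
Your proposal is essentially the paper's argument, though with some unnecessary detours. The paper takes $E=\Ran(A)+\operatorname{span}(e)$ for a single eigenvector $e$ of $B$; this is already $A$-invariant (everything is mapped into $\Ran A$), so adjoining $\Ran A^*$ is not needed. The paper then bounds $\abs{\det(I_E-z^{-1}A_E)}\le\prod_{k=1}^{\nu} s_k(I_E-z^{-1}A_E)$ via the multiplicative Weyl inequality, estimates $s_k\le 1+\abs{z}^{-1}s_k(A)$ for $k<\nu$, and for the last factor uses exactly your ``cleanest route'': since $K:=I_E-z^{-1}PB_E$ kills $e$ and hence has rank $<\nu$, the approximation-number characterisation gives $s_\nu(I_E-z^{-1}A_E)\le\norm{(I_E-z^{-1}A_E)-K}\le\abs{z}^{-1}\norm{A-B}$. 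The Elsner-style multilinear expansion and Hadamard's inequality are not used.

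One bookkeeping slip to fix: in your ``cleanest route'' you write $\prod_{k=1}^{n-1}s_k(zI_G-A')$, but the correct index is $m-1$ with $m=\dim G$, which in your set-up can be as large as $2n+1$. This does not break the argument, because $s_k(A')=0$ for $k>n$ forces $s_k(zI_G-A')\le\abs{z}$ for those $k$, and after dividing by $\abs{z}^{m}$ the surplus factors cancel, leaving exactly the product $\prod_{k=1}^{n}(1+s_k(A)/\abs{z})$. Just make sure to track $m$ versus $n$ when you write it up.
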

\begin{proof}
Fix $z\in \rho(A)\cap \sigma_p(B)$. Note that since $\dim H=\infty$, 
we have $0\in \sigma(A)$, so $z\neq 0$. 
Let $e$ be an eigenvector of $B$ corresponding to $z$ and let 
$E$ denote the smallest closed linear span containing the range of $A$ and $e$. 
Clearly, $E$ is an invariant subspace of $A$ with $\nu :=\dim E \leq n+1$. 
Writing $T_E$ for the restriction of
an operator $T\in L(H)$ to $E$, we have 
\[ 
\lambda_k(A)=
\begin{cases}
  \lambda_k(A_E)  & \text{if $k\leq \nu$,} \\
       0          & \text{if $k>\nu$.}
\end{cases}
\]
Thus 
\begin{equation}
\label{eq:detu1} 
\abs{\det (I-z^{-1}A)}
   = \prod_{k=1}^{\nu}\abs{\lambda_k(I_E-z^{-1}A_E)} 
   \leq \prod_{k=1}^{\nu} s_k(I_E-z^{-1}A_E)\,,
 \end{equation}  
where we have used the multiplicative Weyl 
inequality\footnote{In fact there is equality
here, since for any $\nu\times \nu$ matrix $M$ we have 
\[ \prod_{k=1}^\nu\abs{\lambda_k(M)}^2=\abs{\det(M)}^2=\det(M^*M)=
\prod_{k=1}^\nu s_k(M)^2\,.\]
}
(\ref{pp}).

Now, for $k< \nu $, we have the bound 
\begin{equation}
\label{eq:detu2}
s_k(I_E-z^{-1}A_E)\leq 1+\abs{z}^{-1}s_k(A_E) \leq
1+\abs{z}^{-1}s_k(A)\,,
\end{equation}
which follows from (\ref{eq:snisan}). 
In order to treat the $\nu$-th factor define $K:E\to E$ by 
$K=I_E-z^{-1}PB_E$, where $P$ denotes the orthogonal projection onto $E$. 
Since $Ke=0$ we have $\rank K < \nu$, so 
\begin{multline}
\label{eq:detu3}
s_\nu(I_E-z^{-1}A)
\leq \norm{(I_E-z^{-1}PA_E)-K}\\
=\abs{z}^{-1}\norm{PB_E-PA_E}
\leq \abs{z}^{-1}\norm{A-B}\,,
\end{multline}
where we have used (\ref{eq:snisan}) and (\ref{eq:snpag}). 
Combining (\ref{eq:detu1}), (\ref{eq:detu2}), and (\ref{eq:detu3})
and taking into account that $\abs{z}\geq d(z,\sigma(A))$, 
since $0\in \sigma(A)$, we have 
\[ \abs{\det(I-z^{-1}A)}
     \leq \frac{\norm{A-B}}{d(z,\sigma(A))}
           \prod_{k=1}^{\nu-1}\left(1+\frac{s_k(A)}{d(z,\sigma(A))}\right)\,,\]
and the assertion follows. 
\end{proof}

\begin{remark}
\label{rem:detu}
Note that in the above proof we have only used the hypothesis $\dim H=\infty$
to conclude that $\abs{z}\geq
d(z,\sigma(A))$. Inspection of the proof shows that the
following finite-dimensional analogue of the above result holds. 

Let $A,B\in L(\C^n)$. Then for any $z\in \rho(A)\cap
\sigma(B)$ with $z\neq 0$ we have  
  \[ \abs{\det(I-z^{-1}A)}
     \leq \frac{\norm{A-B}}{d(z,\sigma(A)\cup \{0\})}
           \prod_{k=1}^{n-1}\left(1+\frac{s_k(A)}{d(z,\sigma(A)\cup
               \{0\})}\right)\,.\]
Here we have used the fact that both the eigenvector of $B$ and the
range of $A$ trivially belong to the same $n$-dimensional space. 
\end{remark}

The above proposition can be extended to arbitrary trace class
operators using a standard approximation procedure. 

\begin{proposition}
\label{prop:detug}
Let $H$ be infinite-dimensional and suppose that $A\in S_1(H)$ and 
$B\in L(H)$. Then for any $z \in
\rho(A) \cap \sigma_p(B)$ we have  
  \[ \abs{\det(I-z^{-1}A)}\leq \frac{\norm{A-B}}{d(z,
    \sigma(A))}F_A
   \left ( \frac{1}{d(z,\sigma(A))}
       \right )\,.\] 
\end{proposition}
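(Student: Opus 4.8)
The plan is to deduce Proposition~\ref{prop:detug} from the finite-rank version, Proposition~\ref{prop:detu}, by approximating the trace class operator $A$ in $S_1(H)$ by a sequence of finite-rank operators. Concretely, I would let $A_m$ be the truncation of $A$ obtained from its Schmidt (singular value) expansion $A=\sum_k s_k(A)(\cdot,f_k)g_k$ by keeping only the first $m$ terms, so that $\rank(A_m)\le m$, $\norm{A-A_m}_1\to 0$ as $m\to\infty$, and crucially $s_k(A_m)\le s_k(A)$ for every $k$ (the truncated operator has singular values $s_1(A),\dots,s_m(A),0,0,\dots$). Convergence in $S_1(H)$ implies convergence in operator norm, hence $\norm{A_m-B}\to\norm{A-B}$, and it also implies $\det(I-z^{-1}A_m)\to\det(I-z^{-1}A)$ since the determinant is continuous on $S_1(H)$ (this follows from the bound recalled in Section~\ref{sec:2}).

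The delicate point is the distance to the spectrum: we are given $z\in\rho(A)$, but we need $z\in\rho(A_m)$ in order to apply Proposition~\ref{prop:detu} to $A_m$, and we need control of $d(z,\sigma(A_m))$ from below. I expect this to be the main obstacle. The way around it is to observe that $\sigma(A)$ and $\sigma(A_m)$ are both contained in the disc of radius $\norm{A}_1$, that $A_m\to A$ in operator norm, and that the spectra converge in the Hausdorff sense for this class of operators (or, more elementarily, one can use upper semicontinuity of the spectrum together with the resolvent identity to see that for $m$ large enough $z\in\rho(A_m)$). Once $z\in\rho(A_m)$, upper semicontinuity of the spectrum gives $\liminf_{m}d(z,\sigma(A_m))\ge d(z,\sigma(A))$ — actually it is cleaner to note that for large $m$ the distance $d(z,\sigma(A_m))$ is at least $d(z,\sigma(A))-\varepsilon$ for any prescribed $\varepsilon>0$. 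An alternative, and perhaps the cleanest route, is to replace $d(z,\sigma(A_m))$ by the smaller quantity $d(z,\sigma(A))$ only at the very end, after taking limits, so that I never need a uniform lower bound for $d(z,\sigma(A_m))$; I only need it to be eventually positive so that $z\in\rho(A_m)$, which I get from spectral upper semicontinuity.

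With these ingredients in place, the argument runs as follows. For each $m$ large enough that $z\in\rho(A_m)$, Proposition~\ref{prop:detu} applied to $A_m$ and $B$ gives
\[
\abs{\det(I-z^{-1}A_m)}
\le \frac{\norm{A_m-B}}{d(z,\sigma(A_m))}
\prod_{k=1}^{m}\left(1+\frac{s_k(A_m)}{d(z,\sigma(A_m))}\right)
\le \frac{\norm{A_m-B}}{d(z,\sigma(A_m))}\,F_A\!\left(\frac{1}{d(z,\sigma(A_m))}\right),
\]
where the second inequality uses $s_k(A_m)\le s_k(A)$ together with the definition \eqref{eq:FAdef} of $F_A$ and the fact that each factor $1+r s_k(A)$ is $\ge1$. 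Now let $m\to\infty$. The left-hand side converges to $\abs{\det(I-z^{-1}A)}$ by continuity of the determinant; on the right-hand side $\norm{A_m-B}\to\norm{A-B}$, and since $F_A$ and $t\mapsto 1/t$ are continuous and $d(z,\sigma(A_m))\to d(z,\sigma(A))>0$ (by Hausdorff convergence of the spectra, or by extracting this from upper semicontinuity plus the fact that the $A_m$ have no eigenvalues escaping to where $A$ has none), the right-hand side converges to
\[
\frac{\norm{A-B}}{d(z,\sigma(A))}\,F_A\!\left(\frac{1}{d(z,\sigma(A))}\right),
\]
which is the claimed bound. The only genuinely technical lemma needed beyond what is already in the excerpt is the Hausdorff convergence $\sigma(A_m)\to\sigma(A)$, which for trace class (indeed compact) operators with $A_m\to A$ in norm is standard; if one prefers to avoid even that, one can replace $d(z,\sigma(A_m))$ throughout by $\min\{d(z,\sigma(A_m)),\,d(z,\sigma(A))\}$ when it is smaller, or simply note that $d(z,\sigma(A_m))\to d(z,\sigma(A))$ follows from $d(z,\sigma(A_m))\ge d(z,\sigma(A))-\norm{A-A_m}$ combined with upper semicontinuity.
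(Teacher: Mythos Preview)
Your proof is correct and follows essentially the same approach as the paper: truncate via the Schmidt expansion, apply Proposition~\ref{prop:detu} to $A_m$ and $B$, use $s_k(A_m)\le s_k(A)$ to pass from the finite product to $F_A$, and then let $m\to\infty$ using continuity of the determinant on $S_1(H)$ together with Hausdorff convergence $\sigma(A_m)\to\sigma(A)$ (the paper cites Newburgh for the latter). One small caveat: the throwaway inequality $d(z,\sigma(A_m))\ge d(z,\sigma(A))-\norm{A-A_m}$ in your final parenthetical is false for non-normal operators, so that particular shortcut should be dropped---but your main argument does not rely on it.
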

\begin{proof}
Since $A$ is trace class it has a Schmidt representation 
of the form  
\[ Ax= \sum_{k=1}^{\infty}s_k{(A)}(x, a_k)b_k \quad (\forall x \in H)\,,\]
where $(a_k)^{\infty}_{k = 1}$ and $(b_k)^{\infty}_{k = 1}$ are
orthonormal systems in $H$ (see \cite[Chapter~VI, Theorem~1.1]{3}).  
For any $n\in \N$ we now define a finite-rank operator $A_n$ by 
\[ A_nx= \sum_{k=1}^{n}s_k{(A)}(x, a_k)b_k\, \quad (\forall x \in H)\,.\]
A short calculation shows that 
\begin{equation}
\label{eq:snkA}
  s_k(A_n) = 
  \begin{cases}
     s_k(A) & \text{ for $k\leq n$,}\\
       0    & \text{ for $k>n$,}
  \end{cases}
\end{equation}   
and that 
\[ \lim_{n\to \infty}\norm{A_n-A}_1=0\,. \] 
Now fix $z\in \rho(A) \cap \sigma_p(B)$. 
Since $z\in \rho(A)$ standard perturbation 
theory 
implies that 
there is $N\in \N$ such that $z\in \rho(A_n)$ for all $n\geq N$ 
(see, for example, \cite[Chapter~II, Theorem~4.1]{3}). 
Thus, by Proposition~\ref{prop:detu}, we have  
\begin{equation}
\label{eq:detbound}
   \abs{\det(I-z^{-1}A_n)}\leq 
   \frac{\norm{A_n-B}}{d(z,
    \sigma(A_n))}F_{A}
   \left ( \frac{1}{d(z,\sigma(A_n))}
       \right )\quad (\forall n \geq N)\,,
\end{equation}
where we have used the fact that $F_{A_n}(r)\leq F_A(r)$ for every $r\in
[0,\infty)$, which follows from  (\ref{eq:snkA}). 

In order to conclude the proof we make a number of observations. 
Since the determinant is Lipschitz-continuous on the unit ball of
$S_1(H)$ (see, for example, \cite[Theorem~6.5]{simon}) we have  
\[ \lim_{n \to \infty}\det(I-z^{-1}A_n)=\det(I-z^{-1}A)\,. \]
Furthermore, since the spectrum of $A$ is discrete, the spectrum of $A_n$ converges 
to the spectrum of $A$ in the Hausdorff metric (see \cite[Theorem~3]{newburgh}), so 
\[ \lim_{n\to \infty}d(z,\sigma(A_n)=d(z,\sigma(A))\,. \]
Finally, we clearly have
\[ \lim_{n\to \infty}\norm{A_n-B}=\norm{A-B}\,.  \]
The assertion now follows by taking the limit on both sides of (\ref{eq:detbound}). 
\end{proof}


\section{Bounds for the spectral distance}
\label{sec:4}

The upper and lower bounds for the determinants obtained in the previous section 
can be combined to produce quantitative upper bounds for the spectral distance of 
two trace class operators. Before we proceed we require some more notation. 

Let $\mathcal{F}$ denote the set 
of all strictly monotonically increasing functions 
$F:[0,\infty]\to [0,\infty)$ with $\lim_{r\to \infty}F(r)=\infty$, and let 
$\mathcal{F}_0$ denote the subset of those $F\in \mathcal{F}$ with $F(0)=0$.  
Both $\F$ and $\F_0$ are partially ordered by defining 
\[ F_1\leq F_2 :\Leftrightarrow F_1(r)\leq F_2(r) \quad (\forall r \geq 0)\,.\]
On $\mathcal F$ we define an operation $H$, which will allow us to  
express our bounds in a succinct way: 
\[ H:\mathcal{F} \to  \mathcal{F}_0 \]  
\[ F\mapsto H_F\,,\]
where $H_F$ is given by
\[ H_F(r)=\frac{1}{\tilde{F}^{-1} \left ( \frac{1}{r} \right )}\,, \]
and $\tilde{F}^{-1}$ is the inverse of
$\tilde{F}\in \F_0$ defined 
by $\tilde{F}(r)=rF(r)^2$. 

For later use, we note the following simple consequences of the definition of $H$, 
the proofs of which are left as an exercise. 

\begin{lemma} 
\label{lem:Hlem}
Let $F_1, F_2\in \F$ and, for $m>0$, let $M_m\in \F$ be given 
by $M_m(r)=mr$. Then the following holds.  
  \begin{enumerate}
  \item \label{eq:Hlem1}If $F_1\leq F_2$ then $H_{F_1}\leq H_{F_2}$.
  \item \label{eq:Hlem2}If $F_1=F_2\circ M_m$, then $H_{F_1}=M_m\circ H_{F_2}\circ M_m^{-1}$.  \end{enumerate}
\end{lemma}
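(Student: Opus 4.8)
The plan is to verify each of the two claims directly from the definition of $H$, unwinding the construction $F \mapsto \tilde F \mapsto \tilde F^{-1} \mapsto H_F$ step by step and checking monotonicity or conjugation at each stage.

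For part~(\ref{eq:Hlem1}), I would first observe that if $F_1 \leq F_2$ then $\tilde F_1(r) = rF_1(r)^2 \leq rF_2(r)^2 = \tilde F_2(r)$ for all $r \geq 0$, since both $F_i$ are nonnegative. The next step is to pass to inverses: because $\tilde F_1, \tilde F_2 \in \F_0$ are strictly increasing bijections of $[0,\infty)$, the pointwise inequality $\tilde F_1 \leq \tilde F_2$ reverses under inversion, i.e.\ $\tilde F_1^{-1}(s) \geq \tilde F_2^{-1}(s)$ for all $s \geq 0$. (This is the one genuinely order-theoretic point: if $\tilde F_1(a) = s = \tilde F_2(b)$ then $\tilde F_1(a) = s = \tilde F_2(b) \geq \tilde F_1(b)$, so $a \geq b$ by strict monotonicity of $\tilde F_1$.) Finally, applying this at $s = 1/r$ and taking reciprocals gives $H_{F_1}(r) = 1/\tilde F_1^{-1}(1/r) \leq 1/\tilde F_2^{-1}(1/r) = H_{F_2}(r)$, as desired.

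For part~(\ref{eq:Hlem2}), suppose $F_1 = F_2 \circ M_m$, so $F_1(r) = F_2(mr)$. Then $\tilde F_1(r) = r F_1(r)^2 = r F_2(mr)^2 = m^{-1}(mr) F_2(mr)^2 = m^{-1}\tilde F_2(mr)$, i.e.\ $\tilde F_1 = M_{m^{-1}} \circ \tilde F_2 \circ M_m$. Inverting this composition (each factor being an invertible self-map of $[0,\infty)$) yields $\tilde F_1^{-1} = M_m^{-1} \circ \tilde F_2^{-1} \circ M_{m^{-1}}^{-1} = M_{m^{-1}} \circ \tilde F_2^{-1} \circ M_m$, using $M_m^{-1} = M_{m^{-1}}$. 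Hence $\tilde F_1^{-1}(1/r) = m^{-1}\,\tilde F_2^{-1}(m/r)$, and taking reciprocals gives $H_{F_1}(r) = m / \tilde F_2^{-1}(m/r) = m\, H_{F_2}(1/(m/r) \cdot \text{nothing})$—more carefully, $H_{F_2}(r/m) = 1/\tilde F_2^{-1}(m/r)$, so $H_{F_1}(r) = m\, H_{F_2}(r/m) = (M_m \circ H_{F_2} \circ M_m^{-1})(r)$, which is the claim.

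I do not expect any serious obstacle here: the whole lemma is bookkeeping with compositions and reciprocals, and the only subtlety worth stating explicitly is that pointwise inequality of strictly increasing bijections reverses under inversion. The main thing to be careful about is keeping the direction of the conjugation by $M_m$ straight in part~(\ref{eq:Hlem2}), and remembering that the reciprocal map $r \mapsto 1/r$ is order-reversing so that it exactly compensates the order reversal introduced by inversion—which is why $H$ is order-\emph{preserving} in part~(\ref{eq:Hlem1}) despite the inverse appearing in its definition.
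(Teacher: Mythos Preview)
Your argument is correct in both parts; the only thing to clean up is the stray ``$\cdot\ \text{nothing}$'' in the computation for part~(\ref{eq:Hlem2}), which is just a slip in presentation---the corrected line that follows it is right. The paper itself gives no proof at all (it explicitly leaves the lemma as an exercise), so your direct unwinding of the definition is exactly what was intended and there is nothing further to compare.
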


We are now able to state and prove our main results.
\begin{theorem}
\label{thm:dhat}
Suppose that $\dim H=\infty$ and that $A\in S_1(H)$. 
Then
\[  \hat{d}(\sigma_p(B), \sigma(A)) \leq H_{F_A}(\norm{A-B}) 
  \quad (\forall B \in L(H))\,. \] 
\end{theorem}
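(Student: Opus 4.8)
The plan is to unravel the definition $\hat d(\sigma_p(B),\sigma(A))=\sup_{z\in\sigma_p(B)}d(z,\sigma(A))$ and reduce the theorem to the pointwise statement that $d(z,\sigma(A))\le H_{F_A}(\norm{A-B})$ for every $z\in\sigma_p(B)$. Two cases are immediate: if $\norm{A-B}=0$ then $A=B$, so $\sigma_p(B)=\sigma_p(A)\subseteq\sigma(A)$ and the inequality is trivial; and if $z\in\sigma(A)$ then $d(z,\sigma(A))=0$ and there is nothing to prove. So I would fix $z\in\rho(A)\cap\sigma_p(B)$ and assume $\norm{A-B}>0$. Since $\dim H=\infty$ and $A$ is compact we have $0\in\sigma(A)$, hence $\sigma(A)\ne\emptyset$, and because $\sigma(A)$ is closed with $z\notin\sigma(A)$ the number $\delta:=d(z,\sigma(A))$ satisfies $0<\delta<\infty$.

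The heart of the argument is to play the lower and upper determinant bounds against each other. Proposition~\ref{prop:detl} gives $\abs{\det(I-z^{-1}A)}\ge F_A(1/\delta)^{-1}$, while Proposition~\ref{prop:detug} (applicable since $\dim H=\infty$ and $z\in\rho(A)\cap\sigma_p(B)$) gives $\abs{\det(I-z^{-1}A)}\le \delta^{-1}\norm{A-B}\,F_A(1/\delta)$. Multiplying the resulting chain of inequalities through by $\delta F_A(1/\delta)>0$ eliminates the determinant and yields
\[ \delta \le \norm{A-B}\,F_A(1/\delta)^2\,. \]

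It then remains to solve this inequality for $\delta$, and this is exactly what the operation $H$ is built to do. Dividing by $\delta>0$ and recalling $\tilde F_A(r)=rF_A(r)^2$, the displayed inequality becomes $1/\norm{A-B}\le \tilde F_A(1/\delta)$. Since $F_A$ is real-analytic and strictly increasing, $\tilde F_A\in\F_0$ is strictly increasing with $\tilde F_A(0)=0$ and $\lim_{r\to\infty}\tilde F_A(r)=\infty$, so $\tilde F_A^{-1}$ is defined on $(0,\infty)$ and strictly increasing; applying it gives $\tilde F_A^{-1}(1/\norm{A-B})\le 1/\delta$, i.e.
\[ d(z,\sigma(A))=\delta\le \frac{1}{\tilde F_A^{-1}(1/\norm{A-B})}=H_{F_A}(\norm{A-B})\,. \]
Taking the supremum over $z\in\sigma_p(B)$ completes the proof. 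I do not expect a genuine obstacle here: all the analytic substance is already encapsulated in Propositions~\ref{prop:detl} and~\ref{prop:detug}, and the only thing to be careful about is the bookkeeping with reciprocals and the direction of the inequalities when inverting $\tilde F_A$ (together with checking that $\tilde F_A^{-1}$ is legitimately applied, which is why one records its domain and monotonicity explicitly).
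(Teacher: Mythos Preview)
Your proof is correct and follows essentially the same route as the paper: combine the lower bound from Proposition~\ref{prop:detl} with the upper bound from Proposition~\ref{prop:detug} to obtain $1/\norm{A-B}\le \tilde F_A(1/d(z,\sigma(A)))$, and then invert $\tilde F_A$. Your treatment is slightly more detailed in handling the edge cases ($\norm{A-B}=0$, $z\in\sigma(A)$) and in justifying the invertibility and monotonicity of $\tilde F_A$, but the argument is the same.
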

\begin{proof}
Fix $B\in L(H)$. We need to show that 
\[ d(z,\sigma(A)) \leq H_{F_A}(\norm{A-B}) \quad (\forall z \in
\sigma_p(B))\,.\]
For $z \in \sigma(A)$ the inequality above is trivially satisfied, so
we shall assume that $z\in \rho(A)\cap \sigma_p(B)$. Noting that $z\neq 0$ (since 
$\dim H=\infty$), Propositions~\ref{prop:detl} and \ref{prop:detug}
yield 
\[ \frac{1}{F_A\left(\frac{1}{d(z, \sigma(A))}\right)}
  \leq \abs{ \det(I-z^{-1}A)} 
  \leq \frac{\norm{A-B}}{d(z, \sigma(A))} F_A\left(\frac{1}{d(z, \sigma(A))} \right)\,.\]
Hence  
\[\frac{1}{\norm{A-B}}\leq\tilde{F}_A \left(\frac{1}{d(z, \sigma(A))} \right)\,,\]
where $\tilde{F}_A(r)=rF_A(r)^2$, 
so 
\[\tilde{F}_A^{-1}\left(\frac{1}{\norm{A-B}}\right)\leq
\frac{1}{d(z, \sigma(A))}\,, \]
which implies 
\[d(z, \sigma(A)) 
\leq \frac{1}{\tilde{F}_A^{-1}\left(\frac{1}{\norm{A-B}}\right)}
=H_{F_A}(\norm{A-B})\,,\]
and the assertion follows.  
\end{proof}
An immediate consequence of the previous theorem is the following quantitative bound 
for the Hausdorff distance of the spectra of two trace class operators.  
\begin{theorem} 
\label{thm:hdist}
Suppose that $\dim H=\infty$ and that  $A,B\in S_1(H)$.  
Then 
\[  \Hdist(\sigma(A), \sigma(B)) 
    \leq H_{F_{A,B}}(\norm{A-B})\,,\]
where $F_{A,B}\in \F$ is given by   
\[ F_{A,B}(r)=\max\{ F_A(r), F_B(r) \}\,.\]
\end{theorem}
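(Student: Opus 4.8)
The plan is to deduce Theorem~\ref{thm:hdist} directly from Theorem~\ref{thm:dhat} together with the monotonicity property~\eqref{eq:Hlem1} of the operation $H$ recorded in Lemma~\ref{lem:Hlem}. First I would observe that since $A,B\in S_1(H)\subset L(H)$ and $\dim H=\infty$, Theorem~\ref{thm:dhat} applies with the roles of $A$ and $B$ played in either order: applied to $A$ (and perturbing operator $B$) it gives $\hat d(\sigma_p(B),\sigma(A))\leq H_{F_A}(\norm{A-B})$, and applied to $B$ (and perturbing operator $A$) it gives $\hat d(\sigma_p(A),\sigma(B))\leq H_{F_B}(\norm{A-B})$. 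Since $A$ and $B$ are compact operators on an infinite-dimensional space, their spectra consist of eigenvalues together with $0$; the point $0$ lies in $\sigma(A)\cap\sigma(B)$, so $\hat d(\sigma(A),\sigma(B))=\hat d(\sigma_p(A),\sigma(B))$ and likewise with the roles reversed. This identifies the two one-sided Hausdorff quantities with the ones bounded by Theorem~\ref{thm:dhat}.

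Next I would pass from $F_A$ and $F_B$ to their pointwise maximum $F_{A,B}$. Since $F_A\leq F_{A,B}$ and $F_B\leq F_{A,B}$ in the partial order on $\F$, and since $F_{A,B}$ is itself strictly monotonically increasing with $F_{A,B}(r)\to\infty$ as $r\to\infty$ (being a maximum of two such functions) so that $F_{A,B}\in\F$, part~\eqref{eq:Hlem1} of Lemma~\ref{lem:Hlem} gives $H_{F_A}\leq H_{F_{A,B}}$ and $H_{F_B}\leq H_{F_{A,B}}$. Evaluating at the nonnegative real number $\norm{A-B}$ therefore yields
\[
 \hat d(\sigma(B),\sigma(A))\leq H_{F_{A,B}}(\norm{A-B}),\qquad
 \hat d(\sigma(A),\sigma(B))\leq H_{F_{A,B}}(\norm{A-B}).
\]
Taking the maximum of the two left-hand sides, which by definition is $\Hdist(\sigma(A),\sigma(B))$, gives the claimed inequality.

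I do not anticipate a genuine obstacle here; the result is essentially a packaging of Theorem~\ref{thm:dhat} made symmetric in $A$ and $B$. The only point requiring a little care is the remark that $\hat d(\sigma(A),\sigma(B))$ rather than merely $\hat d(\sigma_p(A),\sigma(B))$ is being controlled — this is where one uses that in infinite dimensions a trace class operator has $0$ in its spectrum, so adjoining $0$ to the eigenvalue set does not change the one-sided distance to the other spectrum (which also contains $0$). A reader wanting to avoid even this observation could instead invoke the fact that for a compact operator the only possible accumulation point of $\sigma_p$ is $0$, so $\sigma(A)=\overline{\sigma_p(A)}$ and $d(\cdot,\sigma(A))=d(\cdot,\overline{\sigma_p(A)})$, while $\sup$ over $\sigma_p(A)$ equals $\sup$ over its closure by continuity of $z\mapsto d(z,\sigma(B))$. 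Either way the passage is routine and the proof is short.
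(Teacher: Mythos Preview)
Your proposal is correct and follows essentially the same route as the paper: apply Theorem~\ref{thm:dhat} in both orders, use Lemma~\ref{lem:Hlem}\eqref{eq:Hlem1} to pass to $F_{A,B}$, and observe that $\hat d(\sigma(B),\sigma(A))=\hat d(\sigma_p(B),\sigma(A))$ because $0\in\sigma(A)\cap\sigma(B)$ in infinite dimensions. One small caveat: your alternative justification via $\sigma(A)=\overline{\sigma_p(A)}$ is not valid in general (a quasi-nilpotent compact operator such as the Volterra operator has $\sigma_p=\emptyset$ but $\sigma=\{0\}$), so stick with your first argument.
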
 

\begin{proof}
Using Theorem~\ref{thm:dhat} together with Lemma~\ref{lem:Hlem} and the fact 
that $B$ is trace class we have 
\[ \hat{d}(\sigma(B), \sigma(A)) 
  =\hat{d}(\sigma_p(B), \sigma(A)) 
  \leq H_{F_{A,B}}(\norm{A-B})\,.  \]
But by symmetry we also have 
\[ \hat{d}(\sigma(A), \sigma(B))  
  \leq H_{F_{A,B}}(\norm{A-B})\,,  \]
and the assertion follows. 
\end{proof}

\begin{remark}
\label{rem:fdim}
The condition $\dim H=\infty$ 
is not essential in the previous two 
theorems. Inspection of their 
proofs together with Remark~\ref{rem:detu} yield the following 
finite-dimensional analogue of Theorem~\ref{thm:dhat} and \ref{thm:hdist}. 

Let $A,B\in L(\C^n)$. Then
\[  \hat{d}(\sigma(B),\sigma(A)\cup\{0\}) 
    \leq H_{F_{A}}(\norm{A-B})\,,\]
and 
\[  \Hdist(\sigma(A)\cup\{0\}, \sigma(B)\cup\{0\}) 
    \leq H_{F_{A,B}}(\norm{A-B})\,.\]
\end{remark}

\section{Comparison with other bounds}
\label{sec:5}
In order to apply the bounds obtained in the previous section some knowledge of the 
singular values of the operators involved is required. Lemma~\ref{lem:Hlem} implies 
that any upper bound for the singular values translates into a bound for 
the spectral distance of two trace class operators. We shall briefly 
discuss a number of possibilities, 
which either reproduce or improve existing bounds. 

We start with a bound involving only one free parameter, namely the trace norm. 
Given $A\in S_1(H)$, we have 
\[ F_A(r)=\prod_{k=1}^\infty(1+rs_k(A))\leq \exp( r\norm{A}_1)\,.\]
Theorem~\ref{thm:hdist} and Lemma~\ref{lem:Hlem} immediately give the following result, 
reproducing the known bound from \cite[Theorem~5.2]{1} obtained by a different method. 

\begin{corollary}
\label{coro:trace}
Let $\dim(H)=\infty$. Then, for any $A,B\in S_1(H)$ not both of them the zero operator, 
we have 
\[ \Hdist(\sigma(A),\sigma(B))\leq m H_{G^S}\left ( \frac{\norm{A-B}}{m} \right )\,, \]
where $G^S(r)=\exp(r)$ and $m=\max\{\norm{A}_1, \norm{B}_1\}$. 
\end{corollary}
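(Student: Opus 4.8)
The plan is to derive Corollary~\ref{coro:trace} as a direct specialisation of Theorem~\ref{thm:hdist}, feeding in the crude exponential upper bound for $F_A$ and then using the scaling property of the operation $H$ recorded in Lemma~\ref{lem:Hlem}. First I would record the elementary inequality
\[ F_A(r)=\prod_{k=1}^\infty(1+rs_k(A))\leq \exp\!\Big(r\sum_{k=1}^\infty s_k(A)\Big)=\exp(r\norm{A}_1)\,, \]
using $1+x\leq e^x$, and the analogous bound for $F_B$. Hence $F_{A,B}(r)=\max\{F_A(r),F_B(r)\}\leq \exp(r\,m)$ with $m=\max\{\norm{A}_1,\norm{B}_1\}$, and $m>0$ precisely because $A$ and $B$ are not both zero. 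Writing $G^S(r)=\exp(r)$ and $M_m(r)=mr$, the right-hand side is exactly $G^S\circ M_m$, so $F_{A,B}\leq G^S\circ M_m$.

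Next I would invoke the monotonicity part, Lemma~\ref{lem:Hlem}\eqref{eq:Hlem1}, to get $H_{F_{A,B}}\leq H_{G^S\circ M_m}$, and then the scaling part, Lemma~\ref{lem:Hlem}\eqref{eq:Hlem2} with $F_2=G^S$ and $F_1=G^S\circ M_m$, to rewrite
\[ H_{G^S\circ M_m}=M_m\circ H_{G^S}\circ M_m^{-1}\,, \]
i.e. $H_{G^S\circ M_m}(t)=m\,H_{G^S}(t/m)$ for all $t\geq 0$. Combining these with Theorem~\ref{thm:hdist} evaluated at $t=\norm{A-B}$ gives
\[ \Hdist(\sigma(A),\sigma(B))\leq H_{F_{A,B}}(\norm{A-B})\leq m\,H_{G^S}\!\left(\frac{\norm{A-B}}{m}\right)\,, \]
which is the claimed estimate. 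One should note that $G^S=\exp$ lies in $\mathcal F$ — it is strictly increasing on $[0,\infty)$ with limit $\infty$ — so that $H_{G^S}$ is well defined; this is a trivial verification but worth a parenthetical remark, since $H$ is only defined on $\mathcal F$.

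I do not anticipate any real obstacle here: the only thing to be slightly careful about is that $\tilde G^S(r)=r\exp(2r)$ is indeed invertible on $[0,\infty)$ (it is strictly increasing from $0$ to $\infty$), so that $H_{G^S}(r)=1/\tilde G^{S\,-1}(1/r)$ makes sense and the formula for $H_{G^S}$ is a genuine function of one variable; this is automatic from $G^S\in\mathcal F_0$-type reasoning. Everything else is a mechanical substitution into Theorem~\ref{thm:hdist} and Lemma~\ref{lem:Hlem}, with no estimates beyond $1+x\leq e^x$.
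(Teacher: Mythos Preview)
Your proposal is correct and follows exactly the approach indicated in the paper: bound $F_A(r)\leq\exp(r\norm{A}_1)$ via $1+x\leq e^x$, then feed this into Theorem~\ref{thm:hdist} together with both parts of Lemma~\ref{lem:Hlem}. You have simply made explicit the two-step use of Lemma~\ref{lem:Hlem} (monotonicity, then scaling) that the paper leaves to the reader.
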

 
Another possibility is to demand that the singular values decay at 
a stretched exponential rate. This leads to the notion of exponential
classes introduced in \cite{10}. 

\begin{definition}
Let $a>0$ and $\alpha>0$. Then 
\[ E(a,\alpha;H)=\all{A\in S_1(H)}{ |A|_{a,\alpha}:=\sup_{k\in\N}
  s_k(A)\exp(ak^{\alpha}) < \infty}\,, \]
is called the \emph{exponential class of type $(a,\alpha)$}. The number 
$|A|_{a,\alpha}$ is called the \emph{$(a,\alpha)$-gauge} or simply 
\emph{gauge} of $A$. 
\end{definition}

Naturally occurring operators belonging to
 these classes include integral operators with analytic kernels (see
 \cite{KR}), composition operators with strictly contracting holomorphic symbols,
or, more generally, 
transfer operators arising from real analytic expanding maps, 
which play an important 
role in smooth ergodic  theory (see \cite{bj_advances}).  
Certain operators on spaces of harmonic functions also belong to this class 
(see \cite{bandtlow_chu}). 

Specialising to operators belonging to exponential classes, 
Theorem~\ref{thm:hdist} and Lemma~\ref{lem:Hlem} yield the following. 

\begin{corollary}
\label{coro:expo}
Let $\dim H=\infty$. Then, for any $A,B\in E(a,\alpha;H)$ not both of them the 
zero operator, we have
\[ \Hdist(\sigma(A),\sigma(B))\leq m H_{G^E_{a,\alpha}}
   \left ( \frac{\norm{A-B}}{m} \right )\,, \]
where 
\[ G^E_{a,\alpha}(r)=\prod_{k=1}^\infty(1+r\exp(-an^\alpha)) \]
and $m=\max\{\abs{A}_{a,\alpha}, \abs{B}_{a,\alpha}\}$.  
\end{corollary}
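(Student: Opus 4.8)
The plan is to apply Theorem~\ref{thm:hdist} together with Lemma~\ref{lem:Hlem} to the singular value estimate coming from the exponential class condition. Suppose $A,B\in E(a,\alpha;H)$ are not both zero, and set $m=\max\{\abs{A}_{a,\alpha},\abs{B}_{a,\alpha}\}$, which is strictly positive. By definition of the gauge we have $s_k(A)\leq m\exp(-ak^\alpha)$ and $s_k(B)\leq m\exp(-ak^\alpha)$ for all $k\in\N$, hence
\[ F_A(r)=\prod_{k=1}^\infty(1+rs_k(A))\leq \prod_{k=1}^\infty\bigl(1+rm\exp(-ak^\alpha)\bigr)=G^E_{a,\alpha}(mr)=(G^E_{a,\alpha}\circ M_m)(r)\,, \]
and likewise for $F_B$; therefore $F_{A,B}=\max\{F_A,F_B\}\leq G^E_{a,\alpha}\circ M_m$. (One should check along the way that $G^E_{a,\alpha}$ indeed belongs to $\F$: it is a product of the required form with summable $\exp(-ak^\alpha)$, so it is well-defined, real-analytic, strictly increasing, vanishing nowhere issues aside it has $G^E_{a,\alpha}(0)=1$, and it tends to $\infty$ as $r\to\infty$.)

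Next I would feed this into the monotonicity and scaling properties of $H$. By Theorem~\ref{thm:hdist},
\[ \Hdist(\sigma(A),\sigma(B))\leq H_{F_{A,B}}(\norm{A-B})\,. \]
Part~(\ref{eq:Hlem1}) of Lemma~\ref{lem:Hlem} applied to $F_{A,B}\leq G^E_{a,\alpha}\circ M_m$ gives $H_{F_{A,B}}\leq H_{G^E_{a,\alpha}\circ M_m}$, and part~(\ref{eq:Hlem2}) applied with $F_1=G^E_{a,\alpha}\circ M_m$ and $F_2=G^E_{a,\alpha}$ gives $H_{G^E_{a,\alpha}\circ M_m}=M_m\circ H_{G^E_{a,\alpha}}\circ M_m^{-1}$. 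Evaluating at $\norm{A-B}$ and using $M_m(x)=mx$, $M_m^{-1}(x)=x/m$ yields
\[ \Hdist(\sigma(A),\sigma(B))\leq m\,H_{G^E_{a,\alpha}}\!\left(\frac{\norm{A-B}}{m}\right)\,, \]
which is the claimed inequality.

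The only genuinely delicate point is bookkeeping about membership in $\F$ and the exact interplay between $\max$ and the bound $G^E_{a,\alpha}\circ M_m$: one must note that taking the pointwise maximum of two functions each dominated by $G^E_{a,\alpha}\circ M_m$ is still dominated by it, so that Lemma~\ref{lem:Hlem}(\ref{eq:Hlem1}) applies to $F_{A,B}$ directly. Everything else is a routine chain of the two parts of Lemma~\ref{lem:Hlem}; there is no real analytic obstacle, since all the hard estimates are already packaged in Theorem~\ref{thm:hdist}. I expect the writeup to be three or four lines.
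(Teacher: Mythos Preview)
Your proposal is correct and follows exactly the route the paper indicates: bound the singular values via the gauge to get $F_{A,B}\leq G^E_{a,\alpha}\circ M_m$, then apply Theorem~\ref{thm:hdist} followed by both parts of Lemma~\ref{lem:Hlem}. The paper itself does not spell out the details beyond pointing to those two results, so your write-up is in fact more explicit than the original.
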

By \cite[Proposition~3.1]{10} we have 
\[ 
\log G^E_{a,\alpha}(r) \sim a^{1/\alpha} \frac{\alpha}{1+\alpha}(\log r)^{1+1/\alpha} 
\quad \text{as $r\to \infty$} 
\]
and a short calculation shows that 
\[ \log H_{G^E_{a,\alpha}}(r) \sim -(2a)^{1/(1+\alpha)} 
 \left ( \frac{1+\alpha}{\alpha} \right )^{\alpha/(1+\alpha)}\abs{\log r}^{\alpha/(1+\alpha)} 
\quad \text{as $r\downarrow 0$}\,, \]
thus Corollary~\ref{coro:expo} improves the 
bound in \cite[Theorem~4.2]{10} obtained using a different method. 

Finally, we turn to the most stringent condition for the decay of the singular 
values, which arises when all but finitely many of the singular values
vanish, or, in other words, if the operators involved are finite
rank. 

Using the same argument as in the previous two cases we obtain a bound
that only relies on knowledge of the first singular value, that is, 
the operator norm. For better comparison with Elsner's bound we
formulate it for $n\times n$ matrices (see Remark~\ref{rem:fdim}). 

\begin{corollary}
\label{coro:fr1}
Let $A,B\in L(\C^n)$. Then  
\[ \Hdist(\sigma(A)\cup\{0\},\sigma(B)\cup\{0\})\leq m H_{G^F}
   \left ( \frac{\norm{A-B}}{m} \right )\,, \]
where 
\[ G^F(r)=(1+r)^n \]
and $m=\max\{\norm{A}, \norm{B}\}$.  
\end{corollary}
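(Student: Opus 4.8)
The plan is to derive Corollary~\ref{coro:fr1} as a direct specialisation of the finite-dimensional analogue of Theorem~\ref{thm:hdist} recorded in Remark~\ref{rem:fdim}, combined with the scaling property of the operation $H$ from part~(\ref{eq:Hlem2}) of Lemma~\ref{lem:Hlem}. First I would observe that for $A\in L(\C^n)$ all singular values with index exceeding $n$ vanish, so the product defining $F_A$ terminates and one has the crude pointwise bound
\[ F_A(r)=\prod_{k=1}^n(1+rs_k(A))\leq (1+r\norm{A})^n\,, \]
since $s_k(A)\leq s_1(A)=\norm{A}$ for all $k$. Setting $m=\max\{\norm{A},\norm{B}\}$, the same estimate gives $F_{A,B}(r)=\max\{F_A(r),F_B(r)\}\leq (1+mr)^n$, and the right-hand side is exactly $G^F(mr)=(G^F\circ M_m)(r)$ with $G^F(r)=(1+r)^n$ and $M_m(r)=mr$ as in Lemma~\ref{lem:Hlem}.

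Next I would feed this into the finite-dimensional bound from Remark~\ref{rem:fdim}, namely
\[ \Hdist(\sigma(A)\cup\{0\},\sigma(B)\cup\{0\})\leq H_{F_{A,B}}(\norm{A-B})\,, \]
and then use monotonicity of $H$ (part~(\ref{eq:Hlem1}) of Lemma~\ref{lem:Hlem}) together with $F_{A,B}\leq G^F\circ M_m$ to get $H_{F_{A,B}}\leq H_{G^F\circ M_m}$. By part~(\ref{eq:Hlem2}) of Lemma~\ref{lem:Hlem}, $H_{G^F\circ M_m}=M_m\circ H_{G^F}\circ M_m^{-1}$, so evaluating at $\norm{A-B}$ yields
\[ H_{G^F\circ M_m}(\norm{A-B})=m\, H_{G^F}\!\left(\frac{\norm{A-B}}{m}\right)\,, \]
which is precisely the claimed bound. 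The only point requiring a word of care is that $m>0$ is needed for $M_m$ and $M_m^{-1}$ to make sense, which is why the hypothesis that $A$ and $B$ are not both zero would formally be invoked — though since matrices here are allowed to be arbitrary in $L(\C^n)$, I would simply note that if $m=0$ then $A=B=0$ and both sides vanish, so the inequality holds trivially in that degenerate case as well.

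I do not anticipate a genuine obstacle here: the corollary is a routine corollary, and every ingredient — the termination of the product for finite-rank operators, the bound $s_k\leq\norm{A}$, the monotonicity and scaling laws for $H$, and the finite-dimensional version of the main theorem — is already in place. If anything, the one mildly delicate step is making sure the substitution $r\mapsto mr$ is tracked correctly through the definition $H_F(r)=1/\tilde F^{-1}(1/r)$ with $\tilde F(r)=rF(r)^2$; this is exactly what part~(\ref{eq:Hlem2}) of Lemma~\ref{lem:Hlem} is designed to handle, so I would just cite it rather than unwinding the inverse functions by hand. Thus the proof is essentially three lines: bound $F_{A,B}$ by $G^F\circ M_m$, apply Remark~\ref{rem:fdim} and Lemma~\ref{lem:Hlem}, and rewrite the result in the stated scaled form.
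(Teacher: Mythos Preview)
Your proposal is correct and follows essentially the same route as the paper, which explicitly says the corollary is obtained ``using the same argument as in the previous two cases'' (i.e.\ bounding $F_A(r)\leq (1+r\norm{A})^n$, then applying the finite-dimensional version in Remark~\ref{rem:fdim} together with Lemma~\ref{lem:Hlem}). Your treatment of the degenerate case $m=0$ is in fact slightly more careful than the paper, which tacitly assumes $m>0$ here while stating the analogous restriction explicitly in Corollaries~\ref{coro:trace} and~\ref{coro:expo}.
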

Note that the bound in the above corollary is of a form very similar to 
Elsner's bound, which for $A,B\in L(\C^n)$ can be written 
\[ \Hdist(\sigma(A),\sigma(B))\leq m H
   \left ( \frac{\norm{A-B}}{m} \right )\,, \]
where $m=\norm{A}+\norm{B}$ and $H(r)=r^{1/n}$. 
Clearly, the bound in Corollary~\ref{coro:fr1} cannot compete with
the Elsner bound (except possibly in special cases), since  
\[ H_{G^F}(r) \sim r^{1/(2n+1)} \quad \text{as $r\downarrow 0$}\,.  \] 

However, in deriving the bound above we have only used the first
singular value. If we have information about the first two singular
values, the bound can be improved. 

\begin{corollary}
\label{coro:fr2}
Let $A,B\in L(\C^n)$. Then  
\[ \Hdist(\sigma(A)\cup\{0\},\sigma(B)\cup\{0\})
   \leq H_{G_{s_1,s_2}^F}
    (\norm{A-B} )\,, \]
where 
\[ G_{s_1,s_2}^F(r)=(1+rs_1)(1+rs_2)^{n-1}\,, \]
with $s_1=\max\{s_1(A), s_1(B)\}$ and $s_2=\max\{s_2(A), s_2(B)\}$. 
\end{corollary}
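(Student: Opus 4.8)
The plan is to reduce Corollary~\ref{coro:fr2} to the finite-dimensional version of Theorem~\ref{thm:hdist} recorded in Remark~\ref{rem:fdim}, exactly as was done for Corollary~\ref{coro:fr1}, but tracking the first two singular values rather than only the operator norm. First I would observe that for any $A\in L(\C^n)$ the singular values are non-increasing, so $s_k(A)\leq s_2(A)\leq s_2$ for all $k\geq 2$ and $s_1(A)\leq s_1$. Hence
\[
F_A(r)=\prod_{k=1}^{n}(1+rs_k(A))\leq (1+rs_1)\prod_{k=2}^{n}(1+rs_2)=(1+rs_1)(1+rs_2)^{n-1}=G_{s_1,s_2}^F(r)\,,
\]
and the same bound holds with $A$ replaced by $B$, so $F_{A,B}=\max\{F_A,F_B\}\leq G_{s_1,s_2}^F$. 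Since $G_{s_1,s_2}^F$ is a polynomial with positive coefficients and value $1$ at $r=0$, it lies in $\mathcal F$ (strictly increasing, tending to $\infty$), so $H_{G_{s_1,s_2}^F}$ is well-defined.

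Next I would invoke the monotonicity of the operation $H$, namely Lemma~\ref{lem:Hlem}\eqref{eq:Hlem1}: from $F_{A,B}\leq G_{s_1,s_2}^F$ we get $H_{F_{A,B}}\leq H_{G_{s_1,s_2}^F}$, hence in particular $H_{F_{A,B}}(\norm{A-B})\leq H_{G_{s_1,s_2}^F}(\norm{A-B})$. Combining this with the finite-dimensional Hausdorff-distance bound from Remark~\ref{rem:fdim},
\[
\Hdist(\sigma(A)\cup\{0\},\sigma(B)\cup\{0\})\leq H_{F_{A,B}}(\norm{A-B})\leq H_{G_{s_1,s_2}^F}(\norm{A-B})\,,
\]
which is precisely the assertion.

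I do not expect a genuine obstacle here: the argument is a direct specialisation, and the only points requiring a moment's care are (i) checking that $G_{s_1,s_2}^F\in\mathcal F$ so that $H$ applies, and (ii) noting that, unlike in Corollaries~\ref{coro:trace}--\ref{coro:fr1}, no rescaling by a parameter $m$ is used, so Lemma~\ref{lem:Hlem}\eqref{eq:Hlem2} is not needed and the bound is stated with $H_{G_{s_1,s_2}^F}$ evaluated directly at $\norm{A-B}$. One could additionally remark that $G_{s_1,s_2}^F = G_{s_2}\circ M_{?}$-type scalings do not simplify things because $s_1$ and $s_2$ enter asymmetrically; this is why the cleaner ``$mH_{G}(\norm{A-B}/m)$'' form is abandoned in favour of the plain expression. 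The only mildly delicate asymptotic statement one might want to append — that $H_{G_{s_1,s_2}^F}(r)\sim (r/s_1)^{1/3}$ as $r\downarrow 0$ when $s_2<s_1$, improving the exponent $1/(2n+1)$ of Corollary~\ref{coro:fr1} to $1/3$ — follows from $\tilde G_{s_1,s_2}^F(r)=rG_{s_1,s_2}^F(r)^2\sim s_1^2 r^3$ near $0$, but this is optional commentary rather than part of the proof.
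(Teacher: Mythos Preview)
Your proof is correct and is exactly the argument the paper has in mind: bound $F_A$ and $F_B$ by $G_{s_1,s_2}^F$ using monotonicity of the singular values, then apply Lemma~\ref{lem:Hlem}\eqref{eq:Hlem1} together with the finite-dimensional bound from Remark~\ref{rem:fdim}.

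One correction to your optional asymptotic remark: to determine $H_{G_{s_1,s_2}^F}(r)$ as $r\downarrow 0$ you need the behaviour of $\tilde G_{s_1,s_2}^F(t)=t(1+ts_1)^2(1+ts_2)^{2(n-1)}$ as $t\to\infty$, not as $t\to 0$ (since $H_G(r)=1/\tilde G^{-1}(1/r)$ and $1/r\to\infty$). There $\tilde G_{s_1,s_2}^F(t)\sim s_1^2 s_2^{2(n-1)} t^{2n+1}$, which gives
\[
H_{G_{s_1,s_2}^F}(r)\sim s_1^{2/(2n+1)} s_2^{(2n-2)/(2n+1)} r^{1/(2n+1)}\quad\text{as }r\downarrow 0,
\]
so the exponent in $r$ remains $1/(2n+1)$; the gain over Corollary~\ref{coro:fr1} lies in the constant prefactor, not in the exponent.
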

A short calculation shows that 
\[   
H_{G_{s_1,s_2}^F}(r)\sim
s_1^{2/(2n+1)}s_2^{(2n-2)/(2n+1)}r^{1/(2n+1)} 
\quad \text{as $r\downarrow 0$}. \]
In order to see that the bound in Corollary~\ref{coro:fr2} can be
better than Elsner's bound we define weighted shift 
matrices $A_\epsilon, B_\epsilon\in
L(\C^n)$ by letting 
\[
A_\epsilon e_k=
\begin{cases}
           e_2    & \text{if $k=1$,}\\
  \epsilon e_{k+1} & \text{if $1<k<n$,}\\
           0      & \text{if $k=n$,}
\end{cases}
\quad 
B_\epsilon e_k=
\begin{cases}
           e_2    & \text{if $k=1$,}\\
  \epsilon e_{k+1} & \text{if $1<k<n$,}\\
  \epsilon e_1    & \text{if $k=n$,}
\end{cases}
\]
where $(e_k)_{k=1}^n$ is an orthonormal basis of $\C^n$. Then we have 
\[ \norm{A_\epsilon-B_\epsilon}=\epsilon\,, \]
\[ s_1(A_\epsilon)=s_1(B_\epsilon)=1\,, \]
and  
\[ s_2(A_\epsilon)\leq s_2(B_\epsilon)=\epsilon\,, \] 
provided that $n>1$. 
For this family of matrices Elsner's bound gives 
\[ H(\norm{A_\epsilon-B_\epsilon})\sim
  \epsilon^{1/n} \quad \text{as $\epsilon \downarrow 0$}\,, \]
which is worse than what we get from the bound in
Corollary~\ref{coro:fr2}, namely 
\[ 
H_{G_{1,\epsilon}^F}(\norm{A_\epsilon-B_\epsilon})\sim
  \epsilon^{(2n-1)/(2n+1)} 
\quad \text{as $\epsilon \downarrow 0$}\,. 
\]
In other words, our bound fares better then Elsner's for matrices
which are small perturbations of matrices of rank one. Of
course, this advantage is obtained by requiring more information about
the matrices, namely upper bounds for the first and second singular
values. 

The previous discussion should have elucidated the following general principle 
at work here. 
Upper bounds for the singular values of $A$ and $B$ translate into 
upper bounds for the function $H_{F_{A,B}}$, which bounds the spectral variation of 
$A$ and $B$. Moreover, the faster the bounds for the singular values decay to zero, the 
slower $F_{A,B}$ will grow at infinity, and so, the faster $H_{F_{A,B}}$ will tend 
to zero at zero. 

\section{Acknowledgements}
We are very grateful to an anonymous referee for bibliographic assistance. 
OFB would like to thank Henk Bruin at the University 
of Vienna for his kind hospitality from May until August 2014, during
which period 
this article was finalised. AG expresses her 
gratitude to Wolfram Just for enlightening discussions during the preparation of this
article.


\begin{thebibliography}{OO}
\bibitem{1}
{\sc O.~F.~Bandtlow,}
\emph{Estimates for norms of resolvents and an application to the perturbation of spectra}, 
Math. Nachr. \textbf{267(1)} (2004), 3--11.


\bibitem{10}
{\sc O.~F.~Bandtlow,}
\emph{Resolvent estimates for operators belonging to exponential classes}, 
Integral Equations Operator Theory, \textbf{61} (2008), 21--43.


 
\bibitem{bandtlow_chu}
{\sc O.~F.~Bandtlow, C.-H.~Chu,}
\emph{Eigenvalue decay of operators on harmonic function spaces},
Bull. Lond. Math. Soc. \textbf{41} (2009), 903--915.

\bibitem{bj_advances}
{\sc O.~F.~Bandtlow, O.~Jenkinson,}
\emph{Explicit eigenvalue estimates for transfer operators acting on spaces
  of holomorphic functions}, 
Adv. Math. \textbf{218(3)} (2008), 902--925.




\bibitem{bhatia}
{\sc R.~Bhatia,}
\emph{Matrix Analysis}, New York, Springer, 1997. 

\bibitem{chenetal}
{\sc Y.~Chen, A.~Nokrane, T.~Ransford,}
\emph{Estimates for the spectrum near algebraic elements}, 
Linear Algebra Appl. \textbf{308} (2000), 153--161.


\bibitem{4}
{\sc L.~Elsner,}
\emph{An optimal bound for the spectral variation of two matrices}, 
Linear Algebra Appl. \textbf{71} (1985), 77--80.


\bibitem{33}
{\sc M.~I.~Gil',}
\emph{Operator Functions and Localization of Spectra}, 
Berlin, Springer, 2003.

\bibitem{gil12}
{\sc M.~I.~Gil',}
\emph{Norm estimates for resolvents of non-selfadjoint operators having Hilbert-Schmidt inverse ones}, 
Math. Commun. \textbf{17} (2012), 599--611. 

\bibitem{gil13}
{\sc M.~I.~Gil',}
\emph{Resolvents of operators inverse to Schatten-von Neumann ones}, 
Ann. Univ. Ferrara (2013), DOI 10.1007/s11565-013-0200-1. 


\bibitem{3}
{\sc I.~Gohberg, S.~Goldberg, M.~A.~Kaashoek,}
\emph{Classes of Linear Operators Vol. 1, }
Basel, Birkh\"auser Verlag, 1990.


\bibitem{henrici}
{\sc P.~Henrici,}
\emph{Bounds for iterates, inverses, spectral variation and fields of values of non-normal matrices}, 
Numer. Math. \textbf{4} (1962), 24--40.






\bibitem{KR}
{\sc H.~K\"onig, S.~Richter,}
\emph{ Eigenvalues of integral operators defined by analytic kernels}, 
Math. Nachr.
  {\bf 119} (1984), 141--155.

\bibitem{ransford}
{\sc Y.~Langlois, T.~Ransford,}
\emph{Counterexample to a conjecture of Elsner on the 
spectral variation of matrices}, 
Linear Algebra Appl. \textbf{349} (2002), 193--195.

 
\bibitem{newburgh}
{\sc J.~D.~Newburgh,}
\emph{The variation of spectra}, Duke Math. J. \textbf{18} (1951), 165--175.

\bibitem{ostrowski}
{\sc A.~Ostrowski,}
\emph{\"{U}ber die Stetigkeit von charakteristischen 
Wurzeln in Abh\"{a}ngigkeit von den Matrizenelementen},
Jber. Deutsch. Math. Verein \textbf{60} (1957), 40--42.  

\bibitem{71}
{\sc A.~Pokrzywa,}
\emph{On continuity of spectra in norm ideals}, Linear Algebra Appl. \textbf{69} (1985), 121--130.


\bibitem{simon}
{\sc B.~Simon,}
\emph{Notes on infinite determinants of Hilbert space operators}, Adv. Math. \textbf{24} (1977), 244--273.
\end{thebibliography}
\end{document}